\newcommand{\Hom}{\mathrm{Hom}}
\newtheorem{theorem}{Theorem}[section]
\newtheorem{definition}{Definition}[section]
\newtheorem{lemma}[definition]{Lemma}
\newtheorem{corollary}[definition]{Corollary}
\def\GG{\mathcal{G}}
\def\<{\langle}
\def\>{\rangle}
\def\to{\rightarrow}
\begin{document}

\title{Ramanujan graphings and correlation decay in local algorithms}
\author{{\'Agnes Backhausz ~~ Bal\'azs Szegedy ~~ B\'alint Vir\'ag}}

\maketitle

\abstract{Let $G$ be a large-girth $d$-regular graph and $\mu$ be a random process on the vertices of $G$ produced by a randomized local algorithm. We prove the upper bound $(k+1-2k/d)\Bigl(\frac{1}{\sqrt{d-1}}\Bigr)^k$ for the (absolute value of the) correlation of values on pairs of vertices of distance $k$ and show that this bound is optimal. The same results hold automatically for factor of i.i.d processes on the $d$-regular tree. In that case we give an explicit description for the (closure) of all possible correlation sequences. Our proof is based on the fact that the Bernoulli graphing of the infinite $d$-regular tree has spectral radius $2\sqrt{d-1}$. Graphings with this spectral gap are infinite analogues of finite Ramanujan graphs and they are interesting on their own right. 

\bigskip

\section{Introduction}

Randomized local algorithms are special type of parallelized algorithms that can be used to produce various 
important structures in graphs (independent sets, dominating sets, matchings, colorings, local samples, etc.) 
in constant running time (see \cite{Cso1},\cite{Cso2},\cite{EL}, \cite{GL}, \cite{GS}, \cite{LHS}, \cite{KG}}, \cite{NS}). 

Let $d\in\mathbb{N}$ be 
a fixed number. The input of the algorithm is a graph $G$ of maximal degree at most $d$. The first step 
of the algorithm puts labels on the vertices of $G$ independently from a probabiliy space $\Omega$. The second step 
evaluates a function $f$ (rule of the algorithm) at each vertex $v$ that depends on the isomorphism type of the labeled 
neighborhood of $v$ of radius $r$. (If $\Omega$ is an infinite probability space then $f$ is assumed to be measurable.)

In this paper we focus on the case of large-girth $d$-regular graphs. 
We give a rather explicit description for the (pointwise closure) of all possible correlation sequences in a local 
algorithm. 
Moreover, we obtain that  
the absolute value of the correlation of the values on two vertices of distance $k$ is at most 
$(k+1-2k/d)(d-1)^{-k/2}$ provided that the girth of the graph is at least $k+2r+2$. Surprisingly, the bound 
itself does not depend on the radius $r$ which is related to the complexity of the algorithm. 
Furthermore, we show that our upper bound is essentially optimal.

In our proof we make use of infinite measurable graphs called {\it graphings}. It turns out that many properties of local algorithms on large-girth $d$-regular graphs can be studied through the properties of a single object $B_d$ called {\it Bernoulli graphing}. For example our result on the correlation decay relies on the fact that $B_d$ is a {\it Ramanujan graphing}. Ramanujan graphs were introduced in the seminal paper \cite{LPS} by Lubotzky, Phillips and Sarnak. A $d$-regular graph is Ramanujan if its second largest eigenvalue (in the absolute value) is at most $2\sqrt{d-1}$. The analogue of the second largest eigenvalue can easily be defined for graphings by the spectral radius on the orthogonal complement of the constant function. In the infinite case $2\sqrt{d-1}$ is the smallest possible value. (Note (see \cite{K}) that this value is also equal to the spectral radius of the adjacency operator of the infinite $d$-regular tree.)
Ramanujan graphings (and more generally graphings with spectral gap) are interesting on their own right.
(Note that M. Ab\'ert has a different, more general definition for the notion of a Ramanujan graphing. \footnote{Personal 
communication.})

Last but not least, our results can also be interpreted from an ergodic theoretic point of view. 
As we will see in Section \ref{randomproc}, $d$-regular graphings can be used to produce random 
processes on the $d$-regular infinite tree $T_d$ that are invariant under the full automorphism group. 
Conversely, automorphism invariant processes can be used to produce $d$-regular graphings. Processes that come 
from the Bernoulli graphing are usually called {\it factor of i.i.d} processes \cite{LN}. Our result on the 
possible correlation sequences and the correlation decay naturally generalizes to ``Ramanujan processes'' 
(i.e., processes that come from
Ramanujan graphings).

To broaden the view on the topic, the last section discusses an interesting connection to the representation theory of ${\rm Aut}(T_d)$. There is a triple correspondence between spherical representations, correlation sequences, and invariant Gaussian processes on $T_d$. 

\bigskip

\noindent{\bf Acknowledgement.}~~The authors are thankful to Mikl\'os Ab\'ert for various prompting suggestions 
and to Viktor Harangi for useful remarks about the manuscript.

\medskip

\section{Ramanujan and Bernoulli graphings}

\begin{definition}\label{defgraphing}
Let $X$ be a Polish topological space and let $\nu$ be a probability
measure on the Borel sets in $X$. A {\em graphing} is a graph $\GG$
on $V(\GG)=X$ with bounded maximal degree and Borel measurable edge set $E(\GG)\subset X\times
X$ such that
\begin{equation}
\label{eq:degreeCond} \int_A e(x,B) d\nu(x) = \int_B e(x,A) d\nu(x)
\end{equation}
for all measurable sets $A,B\subseteq X$, where $e(x,S)$ is the
number of edges from $x\in X$ to $S\subseteq X$.
\end{definition}

Note that finite graphs are special graphings defined on finite probability spaces with uniform distribution.
Let $\GG$ be as in Definition \ref{defgraphing}. If
$f:X\rightarrow\mathbb{C}$ is a measurable function then we define
$\GG f$ by
\begin{equation}
\GG f(x)=\sum_{(x,v)\in E(\GG)}f(v).\end{equation}
A short calculation shows (see \cite{LHS}) that $\GG$ is a self-adjoint operator on $L^2(X)$ of norm at most $d$ where $d$ is the maximal degree in $\GG$.

To keep our notation simple, in this paper we will only consider $d$-regular graphings (every vertex has degree $d$). In this case we say that $\mathcal{M}=\mathcal{G}/d$ is the Markov operator corresponding to $\mathcal{G}$. We have that for $f\in L^2(X)$ the value of $(\mathcal{M}^k f)(x)$ is equal to the expected value of $f$ at the end of a random walk of length $k$ started at $x$. Furthermore if $H\subseteq X$ is a positive measure set then $$p_k(H):=(1_H,\mathcal{M}^k 1_H)/\nu(H)$$ is the probability that a random walk of length $k$ started at a random point of $H$ ends in $H$. 

Let $L^2_0(X)$ denote the subspace in $L^2(X)$ consisting of functions with integral equal to zero.
If $\GG$ is a $d$-regular graphing then the constant $1$ function is an eigenfunction of $\GG$ with eigenvalue $d$ and so its orthogonal complement $L^2_0(X)$ is invariant under the action of $\GG$.
We denote the norm of $\GG$ on $L^2_0(X)$ by $\rho(\GG)$. If $\rho(\GG)<d$ then we say that $\GG$ {\it has spectral gap.} Note that the spectral gap is closely related to return probabilities and mixing rates of random walks on $\GG$.

The graphing $\GG$ is called {\it ergodic} if there is no measurable connected component $S\subset X$ of $\GG$ such that $0<\nu(S)<1$. Graphings are typically not connected as abstract graphs so ergodicity is a good substitute for the notion of connectivity. It is easy to see that if a $d$-regular graphing $\GG$ has spectral gap then it has to be ergodic. Furthermore, an ergodic graphing is either a finite connected graph or its probability space has no atoms.
 
The following statement on $\varrho(\GG)$ is a modification  of well-known 
facts about finite graphs (see e.g. \cite[Theorem 7.1.]{FT}) for graphings.

\begin{lemma}\label{formula} If $\GG$ is an arbitrary $d$-regular graphing then
$$\rho(\GG)=d~\sup_{H,k}~\Bigl|\frac{p_k(H)-\nu(H)}{1-\nu(H)}\Bigr|^{1/k}$$ where $H$ runs through all positive measure sets in $X$ and $k\in\mathbb{N}$.
\end{lemma}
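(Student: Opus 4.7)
The plan is to identify the ratio $(p_k(H)-\nu(H))/(1-\nu(H))$ with a normalized quadratic form of the centered indicator $f_H := 1_H - \nu(H)\cdot 1$ against the self-adjoint operator $\mathcal{M}^k$ on $L^2_0(X)$, and then compare the resulting supremum with the operator norm $\rho(\GG)/d$. Using $\mathcal{M}1=1$, self-adjointness of $\mathcal{M}$, and $\langle 1,1_H\rangle = \nu(H)$, a short expansion gives $\langle f_H,\mathcal{M}^k f_H\rangle = \nu(H)\bigl(p_k(H)-\nu(H)\bigr)$ and $\|f_H\|^2 = \nu(H)(1-\nu(H))$, so the ratio in the lemma equals $\langle f_H,\mathcal{M}^k f_H\rangle/\|f_H\|^2$.

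The upper bound is then immediate: since $\mathcal{M}$ is self-adjoint and leaves $L^2_0(X)$ invariant with operator norm $\rho(\GG)/d$, one has $|\langle f_H,\mathcal{M}^k f_H\rangle| \le (\rho(\GG)/d)^k \|f_H\|^2$, hence $\sup_{H,k}\bigl|(p_k(H)-\nu(H))/(1-\nu(H))\bigr|^{1/k} \le \rho(\GG)/d$.

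For the matching lower bound, I would invoke the spectral theorem for $\mathcal{M}|_{L^2_0}$: let $E$ be its projection-valued measure, and for $f \in L^2_0$ set $\mu_f(\cdot) := \langle f, E(\cdot) f\rangle$, so that $\langle f,\mathcal{M}^k f\rangle = \int \lambda^k\,d\mu_f(\lambda)$. Writing $\rho := \rho(\GG)/d$, at least one of $\pm\rho$ lies in the spectrum of $\mathcal{M}|_{L^2_0}$, hence the spectral projection onto $\{|\lambda| \ge \rho - \epsilon\}$ is nonzero on $L^2_0$ for every $\epsilon > 0$. Every mean-zero simple function is a linear combination of centered indicators $f_H$, so the family $\{f_H\}$ is total in $L^2_0$; consequently at least one $f_H$ has nonzero image under this spectral projection, i.e.\ $\mu_{f_H}(\{|\lambda| \ge \rho - \epsilon\}) > 0$. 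Restricting $\int \lambda^{2k}\,d\mu_{f_H}$ to $\{|\lambda|\ge\rho-\epsilon\}$ and taking $(2k)$-th roots as $k\to\infty$ produces a lower bound of $\rho-\epsilon$; letting $\epsilon \to 0$ closes the gap.

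The main obstacle is the lower bound: the usual variational characterization of $\|\mathcal{M}^k|_{L^2_0}\|$ is a supremum over all of $L^2_0$, not just over centered indicators. Two observations bridge this gap. First, the totality of $\{f_H\}$ in $L^2_0$ guarantees that some $f_H$ has nonzero spectral mass near an endpoint $\pm\rho$ of the spectrum. Second, passing to even exponents $2k$ rather than arbitrary $k$ ensures that contributions of $\mu_{f_H}$ near $+\rho$ and $-\rho$ add rather than cancel, so that a single indicator can witness the extremal asymptotic growth.
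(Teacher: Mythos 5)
Your proposal is correct and follows essentially the same route as the paper: your $f_H=1_H-\nu(H)1$ is just an unnormalized version of the paper's $g_H$, the identification of the ratio with $(f_H,\mathcal{M}^k f_H)/\|f_H\|^2$ and the Cauchy--Schwarz upper bound are identical, and your spectral-projection/even-moment argument is precisely the content of the paper's Lemma \ref{spectral} (stated there without proof) applied to the spanning family of centered indicators. The only difference is that you prove that spectral lemma inline rather than citing it.
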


\medskip

We prove the following theorem. 

\begin{theorem}\label{boppa} Let $\GG$ be a $d$-regular graphing on an atomless probability space $(X,\nu)$. Then $\rho(\GG)\geq 2\sqrt{d-1}$.
\end{theorem}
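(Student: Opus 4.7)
The plan is to apply Lemma \ref{formula}, which reduces the theorem to producing, for every $\epsilon>0$, a positive-measure set $H\subset X$ and an integer $k$ with $\bigl|(p_k(H)-\nu(H))/(1-\nu(H))\bigr|^{1/k}\geq 2\sqrt{d-1}/d-\epsilon$. I will do this by lower-bounding $p_{2m}(H)$ uniformly by the return probability of the simple random walk on $T_d$, and then sending $m\to\infty$.

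The combinatorial input is a universal-cover inequality for closed walks. Since $T_d$ is the universal cover of every connected $d$-regular graph, for any $x\in X$ one can choose a covering map $\pi\colon T_d\to\GG$ of the connected component of $x$ sending a root $r\in T_d$ to $x$. This map induces a bijection between walks of length $2m$ at $r$ in $T_d$ and walks of length $2m$ at $x$ in $\GG$; the subset of walks in $T_d$ ending at $r$ maps into the subset of walks in $\GG$ ending at $x$, so writing $w_{2m}$ for the number of closed walks of length $2m$ at a vertex of $T_d$ one obtains
$$\#\{\text{closed walks of length }2m\text{ at }x\text{ in }\GG\}\;\geq\; w_{2m}\qquad\text{for every }x\in X.$$
Because a random walk of length $2m$ in $\GG$ is a uniformly chosen element of a $d^{2m}$-element set of combinatorial walks, the endpoint distribution $\mathcal{M}^{2m}(x,\cdot)$ is purely atomic, and its mass at $x$ equals $\#\{\text{closed walks at }x\}/d^{2m}\geq w_{2m}/d^{2m}$. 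Hence for every measurable $H$ and every $x\in H$,
$$(\mathcal{M}^{2m}1_H)(x)\;\geq\;\mathcal{M}^{2m}(x,\{x\})\;\geq\; w_{2m}/d^{2m},$$
and integrating over $H$ then dividing by $\nu(H)$ gives the uniform bound $p_{2m}(H)\geq w_{2m}/d^{2m}$ for every positive-measure $H$.

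To finish, I use atomlessness: for each $m$, pick $H_m$ of positive measure with $\nu(H_m)<\tfrac12 w_{2m}/d^{2m}$, which is possible precisely because $\nu$ has no atoms. Then
$$\frac{p_{2m}(H_m)-\nu(H_m)}{1-\nu(H_m)}\;\geq\;\tfrac12\cdot\frac{w_{2m}}{d^{2m}},$$
so Lemma \ref{formula} yields $\rho(\GG)/d\geq (1/2)^{1/(2m)}(w_{2m})^{1/(2m)}/d$. By Kesten's theorem for simple random walk on $T_d$, $\lim_{m\to\infty}(w_{2m})^{1/(2m)}=2\sqrt{d-1}$, so letting $m\to\infty$ yields $\rho(\GG)\geq 2\sqrt{d-1}$.

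The main obstacle is a measure-theoretic point in the second paragraph: one must verify that, even when the base space is atomless, the endpoint of a $2m$-step random walk starting at a fixed vertex carries a genuine point mass at that vertex of size equal to the combinatorial closed-walk count divided by $d^{2m}$. This is true because walks of bounded length in a graphing are honest finite combinatorial objects (each vertex has exactly $d$ neighbours and a walk is a deterministic sequence of edge choices), and the Borel measurability of the edge set makes this combinatorial description compatible with the measurable structure of $(\mathcal{M}^{2m}1_H)(x)$. Once this is granted, the universal-covering inequality and Kesten's asymptotic close the argument at once.
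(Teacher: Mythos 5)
Your proposal is correct and follows essentially the same route as the paper's proof: reduce to Lemma \ref{formula}, bound $p_k(H)$ from below by the return probability on $T_d$ via the universal covering map, use atomlessness to take $\nu(H)$ arbitrarily small, and conclude with the Kesten asymptotic $r_{2m}(o)^{1/(2m)}\to 2\sqrt{d-1}/d$ (Lemma \ref{rwreturn} with $S=\{o\}$). You merely make explicit the walk-lifting and the atom-at-$x$ computation that the paper treats as clear, which is fine.
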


Motivated by the previous theorem we will use the following definition.

\begin{definition} A $d$-regular graphing $\GG$ is {\it Ramanujan} if $\rho(\GG)\leq 2\sqrt{d-1}$.
\end{definition}

\medskip

It follows from Theorem \ref{boppa} that a Ramanujan graphing $\GG$ is either a finite Ramanujan graph or $\rho(\GG)=2\sqrt{d-1}$.

We continue with the definition of the Bernoulli graphing of the $d$-regular tree. 
Let $T_d$ denote the $d$-regular infinite tree and let $T_d^*$ denote the version of $T_d$ in which a special vertex $o$  called {\it root} is distinguished. 
The set $Y=[0,1]^{T_d^*}$ is a probability space with the product measure. The group of root preserving automorphisms of $T_d^*$ is also acting on $Y$ by the permutation of the coordinates. This action is obviously measure preserving. We denote by $\Omega_d$ the space $Y/{\rm Aut}(T_d^*)$ with the inherited probability measure $\nu_d$.
We connect two elements in $\Omega_d$ by an edge if one can be obtained from the other by replacing the root to a neighboring vertex. The graph $B_d$ constructed this way is a $d$-regular graphing (see \cite{LHS}) that is called the Bernoulli graphing of $T_d$. Note that with probability one the connected component of a random element in $\Omega_d$ is isomorphic to $T_d$.

The following theorem seems to have been known for a while (see \cite{KT} or \cite{LN} 
Theorem 2.1.) We include a simple proof for completeness.

\begin{theorem}\label{bernram} For every $d\geq 2$ the Bernoulli graphing $B_d$ is Ramanujan.
\end{theorem}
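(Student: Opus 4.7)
The plan is to exploit the fact that $\nu_d$-almost every connected component of $B_d$ is isomorphic to the $d$-regular tree $T_d$, combined with Kesten's theorem that the adjacency operator $A$ of $T_d$ has spectral radius $2\sqrt{d-1}$ on $\ell^2(T_d)$ (cited in the excerpt as \cite{K}). By Lemma~\ref{formula}, it suffices to show that for every measurable $H\subseteq\Omega_d$ and every $k\in\mathbb{N}$,
\[
\left|\frac{p_k(H)-\nu_d(H)}{1-\nu_d(H)}\right|^{1/k} \;\leq\; \frac{2\sqrt{d-1}}{d}.
\]

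First, by the tree-component structure, the integral kernel of $B_d^k$ depends only on graph distance: $K_{B_d^k}(x,y)=n_{k,\mathrm{dist}(x,y)}$, where $n_{k,j}$ counts walks of length $k$ between two vertices at distance $j$ in $T_d$. Summing along the diagonal,
\[
\tau(B_d^k)\;:=\;\int K_{B_d^k}(x,x)\,d\nu_d(x)\;=\;n_{k,0},
\]
which by Kesten's theorem coincides with the moment sequence of the Kesten--McKay measure $\mu_{KM}$ on $[-2\sqrt{d-1},2\sqrt{d-1}]$. Hence $\mu_{KM}$ is the spectral distribution of $B_d$ with respect to the canonical tracial state $\tau$ on the von Neumann algebra $M=\langle B_d,L^\infty(\Omega_d)\rangle''$.

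The main step is to upgrade this moment/tracial information to a bound on the operator norm of $B_d$ restricted to $L^2_0(\Omega_d)$. One concrete route is to construct the resolvent of $B_d$ directly using the explicit tree Green's function $G_\lambda(u,v)=c(\lambda)\rho(\lambda)^{\mathrm{dist}(u,v)}$ for $|\lambda|>2\sqrt{d-1}$, where $(d-1)|\rho(\lambda)|^2<1$. The induced integral operator $R_\lambda$ on $L^2(\Omega_d)$ can be assembled from the tree-Chebyshev expansion $S_{j+1}=B_d S_j-(d-1)S_{j-1}$ of the distance-$j$ sums $S_j f(x)=\sum_{y:\mathrm{dist}(x,y)=j}f(y)$, and one verifies $R_\lambda(B_d-\lambda I)=I$ on $L^2_0$. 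This would give $\sigma(B_d|_{L^2_0})\subseteq[-2\sqrt{d-1},2\sqrt{d-1}]$ and therefore $\rho(B_d)\le 2\sqrt{d-1}$.

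The main obstacle is precisely the $L^2$-boundedness of $R_\lambda$: the critical summability $(d-1)|\rho|^2<1$ is weaker than the $\ell^1$-condition $(d-1)|\rho|<1$ that would make a Schur test immediate, so one must use genuinely $L^2$ (not $L^1$) arguments and the combinatorics of the Chebyshev-type polynomials. An alternative, more abstract route is to invoke the finite tracial von Neumann algebra structure of $(M,\tau)$: in such an algebra, the support of the spectral distribution of a self-adjoint element equals its spectrum, which after excising the constant direction (carrying the eigenvalue $d$) yields the desired bound at once.
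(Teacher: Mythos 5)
Your proposal has a genuine gap, and it is structural rather than technical: at no point do you use the defining feature of the Bernoulli graphing, namely that the labels are i.i.d., so that a local function of the labels near the root is \emph{independent} of the same function read off at a far-away vertex. Both of your routes use only the fact that $\nu_d$-almost every component of $B_d$ is a copy of $T_d$ (walk counts, the trace moments $n_{k,0}$, the tree Green's function). That information alone cannot suffice: there are graphings all of whose components are isomorphic to $T_d$ that are \emph{not} Ramanujan --- for instance the graphing associated to an invariant branching Markov chain on $T_d$ whose transition matrix has second eigenvalue $\lambda\in(1/\sqrt{d-1},1)$; its root-label correlations decay like $\lambda^k$, which by (\ref{correq}) and (\ref{correq2}) is incompatible with $\rho\leq 2\sqrt{d-1}$. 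So any argument that only sees the component structure proves too much and must fail somewhere. Concretely: in the von Neumann algebra route, the tracial statement ``support of the spectral distribution equals the spectrum'' is valid for $B_d$ viewed inside the von Neumann algebra of the orbit equivalence relation, which acts fiberwise on $\ell^2$ of the components --- and there the norm is indeed $2\sqrt{d-1}$ for trivial reasons. But the theorem is about the action on $L^2_0(\Omega_d,\nu_d)$, and the passage from $L^2(\mathcal R)$ to $L^2(\Omega_d)$ is not spectrum-preserving (the eigenvalue $d$ of the constant function, absent from the Kesten--McKay measure, already shows this); transferring the bound to $L^2_0(\Omega_d)$ is exactly the content of the theorem, not a formal consequence of finiteness of the trace. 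In the resolvent route, the obstacle you flag is likewise not a technicality: summing $\sum_j \rho(\lambda)^j S_j$ on $L^2_0$ for $\lambda$ near $2\sqrt{d-1}$ requires $\|S_j\|_{L^2_0}\lesssim (j+1)(d-1)^{j/2}$, which by (\ref{correq2}) is essentially equivalent to the Ramanujan property you are trying to prove, so that route is circular as stated. A further, smaller issue: your reduction via Lemma \ref{formula} would require the bound $|p_k(H)-\nu(H)|/(1-\nu(H))\leq(2\sqrt{d-1}/d)^k$ for \emph{every} $k$ and every measurable $H$, but arbitrary measurable sets $H$ are never touched again in your argument.

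The missing idea is the one the paper uses. By Lemma \ref{spectral} it suffices to control $\limsup_k|(f,\mathcal M^k f)|^{1/k}$ for $f$ in a spanning subset of $L^2_0(\Omega_d)$, and one takes the set of mean-zero functions depending only on the labels in a ball $S_r$ around the root; such local functions are dense, which is where the product structure of $[0,1]^{T_d^*}$ enters. For such $f$, one lifts to $[0,1]^{T_d^*}$ and observes that if the $k$-step random walk started at the root ends outside $S_{2r}$, then the two evaluations of $f$ depend on disjoint sets of i.i.d.\ labels and are therefore independent, contributing $0$ to the correlation; hence $|(f,\mathcal M^k f)|\leq r_k(S_{2r})$, and Lemma \ref{rwreturn} gives the exponent $2\sqrt{d-1}/d$. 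This independence step is precisely what distinguishes $B_d$ from an arbitrary graphing with tree components, and it is absent from your proposal.
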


To prove Theorem \ref{boppa} and Theorem \ref{bernram} we will need some preparation.
The next lemma is an easy consequence of the spectral theorem.

\begin{lemma}\label{spectral} Let $\mathcal{F}$ be a bounded, self-adjoint operator on the Hilbert space $\mathcal{H}$ and assume that $G$ spans $\mathcal{H}$. Then
$$\|\mathcal{F}\|=\sup_{v\in G}\Bigl(\limsup_{i\to\infty}\Bigl|\frac{(v,\mathcal{F}^iv)}{(v,v)}\Bigr|^{1/i}\Bigr).$$
\end{lemma}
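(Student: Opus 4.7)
The plan is to reduce everything to the spectral theorem applied to the scalar spectral measures. For each $v\in\mathcal{H}$, attach the positive finite Borel measure $\mu_v$ on $[-\|\mathcal{F}\|,\|\mathcal{F}\|]$ characterized by $(v,p(\mathcal{F})v)=\int p(\lambda)\,d\mu_v(\lambda)$ for every polynomial $p$; in particular $(v,\mathcal{F}^i v)=\int \lambda^i\,d\mu_v(\lambda)$ and $(v,v)=\mu_v(\mathbb{R})$. Write $r(v)$ for the limsup on the right-hand side of the claim and let $s(v):=\sup\{|\lambda|:\lambda\in\mathrm{supp}(\mu_v)\}$.

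The first step is to identify $r(v)$ with $s(v)$. Using self-adjointness I get $(v,\mathcal{F}^{2i}v)=\|\mathcal{F}^i v\|^2=\int \lambda^{2i}\,d\mu_v(\lambda)$, and the standard $L^{2i}\!\to\!L^\infty$ argument on $(\mathrm{supp}(\mu_v),\mu_v)$ gives
\[
\lim_{i\to\infty}\Bigl(\tfrac{1}{(v,v)}\!\int \lambda^{2i}\,d\mu_v(\lambda)\Bigr)^{1/(2i)}=s(v),
\]
which handles the even subsequence. For the odd indices, Cauchy--Schwarz $|(v,\mathcal{F}^{2i+1}v)|\le \|\mathcal{F}^{i+1}v\|\,\|\mathcal{F}^i v\|$ together with the even case gives the matching upper bound $s(v)$; the lower bound $r(v)\ge s(v)$ follows already from the even subsequence. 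Thus $r(v)=s(v)$ for every $v$.

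The easy inequality $\sup_{v\in G} r(v)\le \|\mathcal{F}\|$ is immediate from $s(v)\le\|\mathcal{F}\|$. For the reverse, let $E(\cdot)$ denote the projection-valued spectral measure of $\mathcal{F}$ and set $\Lambda:=\sup_{v\in G} s(v)$. By definition of $\mu_v$, the condition $s(v)\le\Lambda$ is equivalent to $E(\mathbb{R}\setminus[-\Lambda,\Lambda])\,v=0$, i.e.\ $v\in\mathrm{Ran}\,E([-\Lambda,\Lambda])$. Since $\mathrm{Ran}\,E([-\Lambda,\Lambda])$ is a closed subspace containing $G$, and $G$ spans $\mathcal{H}$, it equals $\mathcal{H}$. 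Hence $E([-\Lambda,\Lambda])=I$, so $\mathrm{spec}(\mathcal{F})\subseteq[-\Lambda,\Lambda]$ and therefore $\|\mathcal{F}\|\le\Lambda$.

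The only mildly delicate point is the identification $r(v)=s(v)$: if one tried to read off the limsup directly from the odd moments one would have to worry about sign cancellation in $\int \lambda^{2i+1}d\mu_v$, which is why I pass to the even subsequence via $\|\mathcal{F}^i v\|^2$ and use Cauchy--Schwarz to interpolate. Once this is in hand, the spectral-projection argument in the last paragraph closes everything up cleanly.
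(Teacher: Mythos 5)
Your proof is correct, and it follows exactly the route the paper indicates: the paper offers no written proof, merely calling the lemma ``an easy consequence of the spectral theorem,'' and your argument via the scalar spectral measures $\mu_v$ (even moments give $r(v)=s(v)$, Cauchy--Schwarz handles the odd moments, and the spectral projection $E([-\Lambda,\Lambda])$ plus the density of the span of $G$ gives $\|\mathcal{F}\|\le\Lambda$) is precisely the standard way to fill in that claim. No discrepancy to report.
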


Using this lemma we are ready to prove Lemma \ref{formula}.

\medskip

\noindent{\it Proof of Lemma \ref{formula}} For a positive measure set $H\subseteq X$ let $g_H=\frac{1}{\sqrt{\nu(H)}}1_H-\sqrt{\nu(H)}1_X$. We will use that $g_H\in L^2_0(X)$. Then for every $k\in\mathbb{N}$ we have that 
$$\frac{\rho(\GG)}{d}=\rho(\mathcal{M})\geq |(g_H,\mathcal{M}^k g_H)/(g_H,g_H)|^{1/k}=|(p_k(H)-\nu(H))/(1-\nu(H))|^{1/k}.$$ To verify the above calculation note that $\mathcal{M}$ is a self-adjoint operator and thus $(1_X,\mathcal{M}^k 1_H)=(\mathcal{M}^k1_X,1_H)=(1_X,1_H)=\nu(H)$. The other inequality follows from Lemma \ref{spectral} and the fact that functions of the form $g_H$ span the space $L^2_0(X)$. 
\hfill $\square $

\medskip

The next lemma is well known from probability theory \cite{W}.

\begin{lemma}\label{rwreturn} Let $S\subset T_d^*$ be a finite subset and let $r_k(S)$ denote the probability that a random walk started at the root ends in $S$. Then $\lim_{k\to\infty}r_{2k}(S)^{1/2k}=\frac{2\sqrt{d-1}}{d}$, if $S$ contains vertices at even distance from the 
root. Moreover,  $\lim_{k\to\infty}r_{2k+1}(S)^{1/(2k+1)}=\frac{2\sqrt{d-1}}{d}$, if $S$ contains vertices at odd distance.
\end{lemma}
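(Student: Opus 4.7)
The plan is to reduce the statement to the classical return-probability estimate for the simple random walk on $T_d$, namely $\lim_{k\to\infty} p_{2k}(o,o)^{1/2k} = 2\sqrt{d-1}/d$ (Kesten's theorem, which is what the reference \cite{W} supplies). Writing $\mathcal{M}$ for the Markov operator on $T_d$ and letting $p_k(u,v)$ denote the $k$-step transition probabilities, we have $r_k(S) = \sum_{v \in S} p_k(o,v)$, so the task is to control $p_k(o,v)$ uniformly for $v \in S$.

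For the upper bound I would use that $\mathcal{M}$ is self-adjoint together with vertex-transitivity of $T_d$. By Cauchy--Schwarz, $p_{2k}(o,v) = (1_o, \mathcal{M}^{2k} 1_v) \leq \|\mathcal{M}^k 1_o\|_2 \, \|\mathcal{M}^k 1_v\|_2$, and reversibility gives $\|\mathcal{M}^k 1_o\|_2^2 = p_{2k}(o,o)$, so $p_{2k}(o,v) \leq p_{2k}(o,o)$. Therefore $r_{2k}(S) \leq |S| \, p_{2k}(o,o)$, and since $|S|^{1/2k} \to 1$, we get $\limsup_{k\to\infty} r_{2k}(S)^{1/2k} \leq 2\sqrt{d-1}/d$. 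The odd case is handled identically after writing $\mathcal{M}^{2k+1} = \mathcal{M}^k \mathcal{M}^{k+1}$ and using $\|\mathcal{M}^{k+1} 1_v\|_2^2 = p_{2k+2}(v,v) = p_{2k+2}(o,o)$.

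For the lower bound, pick any $v \in S$ at distance $2m$ from the root (which exists by hypothesis in the even case). The Chapman--Kolmogorov identity gives
\begin{equation*}
p_{2k}(o,v) \;\geq\; p_{2m}(o,v) \cdot p_{2k-2m}(v,v) \;=\; p_{2m}(o,v) \cdot p_{2k-2m}(o,o),
\end{equation*}
where the last equality uses vertex-transitivity. Since $p_{2m}(o,v)$ is a fixed positive constant (the walk can reach $v$ in exactly $2m$ steps), and $r_{2k}(S) \geq p_{2k}(o,v)$, taking $2k$-th roots and invoking Kesten's theorem yields $\liminf_{k\to\infty} r_{2k}(S)^{1/2k} \geq 2\sqrt{d-1}/d$. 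In the odd case, one chooses $v \in S$ at odd distance $2m+1$ and applies the analogous decomposition $p_{2k+1}(o,v) \geq p_{2m+1}(o,v) \, p_{2k-2m}(o,o)$.

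No step presents a serious obstacle; the only thing to verify carefully is the parity matching, which is why the hypothesis on the distances in $S$ is needed: without a vertex of the correct parity, the transition probabilities $p_k(o,v)$ vanish for all $v \in S$ on a whole arithmetic progression of $k$'s and there is nothing to say. The heart of the argument is just Kesten's classical computation together with elementary manipulations of the transition kernel.
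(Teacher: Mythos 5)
Your proof is correct. There is nothing in the paper to compare it with: the authors give no argument for this lemma, dismissing it as ``well known from probability theory'' with a citation to Woess \cite{W}. Your write-up supplies exactly the standard deduction that citation is meant to cover: Kesten's on-diagonal asymptotics $p_{2k}(o,o)^{1/2k}\to 2\sqrt{d-1}/d$, the upper bound $p_{2k}(o,v)\le p_{2k}(o,o)$ (hence $r_{2k}(S)\le |S|\,p_{2k}(o,o)$) via self-adjointness of the Markov operator, Cauchy--Schwarz and vertex-transitivity, and the matching lower bound via Chapman--Kolmogorov through a vertex of $S$ of the appropriate parity; your closing remark about why the parity hypothesis is needed (bipartiteness kills the off-parity transition probabilities) is also on point.
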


\bigskip

\noindent{\it Proof of Theorem \ref{boppa}:}~Since $T_d$ covers every $d$-regular graph it is clear that for every positive measure set $H\subseteq X$ we have that $p_k(H)\geq r_k(o)$. The fact that $X$ is atomless implies that $\nu(H)$ can be arbitrary small in Lemma \ref{formula} and thus we obtain that $r_k(o)^{1/k}\leq\rho(\GG)/d$. By Lemma \ref{rwreturn} this completes the proof.
\hfill$\square $

\bigskip

\noindent {\it Proof of Theorem \ref{bernram}:}~It follows from Theorem \ref{boppa} that $\rho(B_d)\geq 2\sqrt{d-1}$. Thus by Lemma \ref{spectral} it remains to show that for some spanning set $G\subset L_0^2(\Omega_d)$ the inequality $\limsup_{i\to\infty}|(v,B_d^iv)|^{1/i}\leq 2\sqrt{d-1}$ holds whenever $v\in G$. Let $G$ be the set of all functions with $0$ integral and norm $1$ on $\Omega_d$ that depend only on the labels in a bounded neighborhood of the root. 

For $r\in\mathbb{N}$ let $S_r$ denote the neighborhood of the root in $T_d^*$ of radius $r$. Assume that $f\in G$ is a function which depends only on the labels in $S_r$. Using the notation of Lemma \ref{rwreturn} we claim that $c_k=|(\mathcal{M}^k f,f)|\leq r_k(S_{2r})$ where $\mathcal{M}=B_d/d$ is the Markov operator. This fact together with Lemma \ref{rwreturn} will complete the proof. 

To prove the claim observe that $c_k$ is equal to the correlation of the values of $f$ at the two endpoints of a random walk of length $k$ started at a random point $x\in X$.  Using the construction of $\Omega_d$ we lift the situation to the probability space $[0,1]^{T_d^*}$. By abusing the notation we assume that $f$ is defined on $[0,1]^{T_d^*}$ and it is invariant under ${\rm Aut}(T_d^*)$. For an element $\omega\in [0,1]^{T_d^*}$ and $v\in T_d^*$ let $g(v,\omega)$ denote the value of $f$ when the root is replaced to $v$.  (The fact that $g(v,\omega)$ is well-defined relies on the fact that $f$ is invariant under ${\rm Aut}(T_d^*)$.) The value of $c_k$ has the following description. We choose a random labeling $\omega$ of the vertices $T_d^*$ with $[0,1]$, start a random walk of length $k$ at the root of $T_d^*$ and on this probability space we take the correlation between $g(o,\omega)$ and $g(v,\omega)$ where $v$ is the endpoint of the walk. Conditioned on the fact that the random walk ends outside $S_{2r}$ it is clear that $g(o,\omega)$ and $g(v,\omega)$ are independent and so the correlation is $0$. It follows that the return probability to $S_{2r}$ is an upper bound for $c_k$.  
\hfill $\square $

\medskip

We finish this section with an observation on the spectral properties of $B_d$.
We will use a general fact about operators. Assume $\mathcal{F}$ is a bounded self-adjoint operator on a Hilbert space $\mathcal{H}$ , $f\in\mathcal{H}$ and $p\in\mathbb{R}[x]$. Furthermore let $P$ denote the projection valued measure corresponding to $\mathcal{F}$ used in the spectral theorem. Then  
\begin{equation}\label{specfact}
(f,p(\mathcal{F})f)=\int p~d\sigma_f
\end{equation}
where $\sigma_f(M)=\|P(M)f\|_2^2$ for $M\subset \mathbb R$ measurable. 
Note that if $\|f\|_2=1$ then $\sigma_f$ is a probability measure and we say that $\sigma_f$ is the spectral measure of $f$ corresponding to $\mathcal{F}$.

\begin{lemma}\label{confunc} Let $h:\Omega_d\rightarrow\{-1,1\}$ be the function such that $h(\omega)=-1$ if the label on the root is in $[0,1/2]$ and $h(\omega)=1$ otherwise. Then the spectral measure of $h$ corresponding to $B_d$ is the Plancherel measure of $T_d$, i.e. it is concentrated on $[-2\sqrt{d-1},2\sqrt{d-1}]$, and its density is the following: $\frac{d}{2\pi}\frac{\sqrt{4(d-1)-t^2}}{d^2-t^2}$ (it is also called the Kesten--McKay measure, see e.g. \cite{W}).
\end{lemma}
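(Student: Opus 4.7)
The plan is to identify $\sigma_h$ by matching all of its moments against those of the Kesten--McKay measure. Since $\|h\|_2 = 1$ and $\int h\,d\nu_d = 0$, we have $h \in L^2_0(\Omega_d)$ and $\sigma_h$ is a probability measure. By (\ref{specfact}) applied with $p(x) = x^k$, it suffices to show
$$
(h, B_d^k h) = N_k, \qquad k = 0, 1, 2, \dots,
$$
where $N_k$ denotes the number of closed walks of length $k$ at a fixed vertex of $T_d$. This will determine $\sigma_h$ uniquely because the Kesten--McKay measure is supported on the compact interval $[-2\sqrt{d-1}, 2\sqrt{d-1}]$ and therefore is the unique probability measure with these moments.

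To compute $(h, B_d^k h)$, I would lift the problem to $Y = [0,1]^{T_d^*}$ in the same way as in the proof of Theorem \ref{bernram}. Write $s(t) = -1$ for $t \in [0,1/2]$ and $s(t) = 1$ otherwise, and define $g(v,\omega) = s(\omega_v)$, which is the value of $h$ when the root is moved to $v$. Unfolding the definition of $B_d$ on the Bernoulli graphing gives
$$
(h, B_d^k h) = \sum_{w} \int_Y s(\omega_o)\, s(\omega_{w(k)})\, d\omega,
$$
where the sum is over walks $w$ of length $k$ in $T_d^*$ starting at the root. Under the product measure, the coordinates $\omega_v$ are i.i.d.\ uniform on $[0,1]$, so the random variables $s(\omega_v)$ are i.i.d.\ $\pm 1$ with mean $0$. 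Consequently the integrand vanishes whenever $w(k) \neq o$, and equals $1$ whenever $w(k) = o$. Hence $(h, B_d^k h)$ is exactly the number of walks of length $k$ in $T_d^*$ that return to the root, i.e.\ $N_k$.

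The $N_k$ are well known to be the moments of the Kesten--McKay measure (this is Kesten's theorem, which is precisely the spectral computation for the adjacency operator on $T_d$; the explicit density in the statement is the standard formula, see \cite{W}). Combining these two observations, the moments of $\sigma_h$ coincide with those of the Kesten--McKay measure, and moment-uniqueness on a compact interval finishes the proof.

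The only delicate point is the passage from the graphing to the counting on $T_d$: one has to check that the pullback of the walk structure from $B_d$ to $Y$ really produces walks in $T_d^*$ (without overcounting or missing orientations), which is immediate from the way $B_d$ is built as a quotient of the root-moving action on $Y$. Apart from that, everything is either the product-measure independence of Bernoulli labels or the standard fact that the Kesten--McKay measure is the spectral measure of the adjacency operator of $T_d$ at a vertex.
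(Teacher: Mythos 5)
Your proof is correct and follows essentially the same route as the paper: both lift $h$ to the labeled tree, use independence and mean zero of the root labels to show that only closed walks contribute to $(h,B_d^k h)$, and identify the resulting moments with those of the Kesten--McKay measure. The only differences are cosmetic --- the paper phrases the computation via the Markov operator $B_d/d$ and return probabilities rather than adjacency-walk counts, and you make the moment-determinacy step (compact support) explicit where the paper leaves it implicit.
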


\begin{proof} Let $\sigma_h'$ be the spectral measure of $h$ corresponding to $B_d/d$. We use the proof of Theorem \ref{bernram} for the specific function $h$. The argument yields that $(h,(B_d/d)^kh)$ is equal to the return probability of a random walk of length $k$ started at the root. On the other hand by (\ref{specfact}) $(h,(B_d/d)^kh)$ is equal to the $k$-moment of $\sigma_h'$. This completes the proof. 
\hfill $\square $\end{proof}

\medskip

\begin{corollary}\label{confunccor} The set $\{\sigma_f|f\in L_0^2(\Omega)~,~\|f\|_2=1\}$ corresponding to $B_d$ is dense in the set of all probability measures on $[-2\sqrt{d-1},2\sqrt{d-1}]$ with respect to the weak topology.
\end{corollary}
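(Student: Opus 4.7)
The plan is to realize the target spectral measures as those of vectors in the cyclic subspace generated by the function $h$ of Lemma \ref{confunc}. Note first that $h \in L_0^2(\Omega_d)$ (since $\nu_d(h=1)=\nu_d(h=-1)=1/2$) and that $B_d$ preserves $L_0^2(\Omega_d)$ (being self-adjoint with the constants as an eigenspace), so the closed cyclic subspace $\cH_h := \overline{\mathrm{span}}\{B_d^k h : k\ge 0\}$ lies inside $L_0^2(\Omega_d)$. By the spectral theorem together with (\ref{specfact}), the map $\phi \mapsto \phi(B_d) h$ extends to a Hilbert space isometry $L^2(\sigma_h) \to \cH_h$, and for $f = \phi(B_d) h$ with $\|\phi\|_{L^2(\sigma_h)} = 1$ one computes $\|f\|_2 = 1$ and $\sigma_f = |\phi|^2 \sigma_h$ (verify on polynomial test functions via (\ref{specfact}) and pass to the $L^2$-limit).

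This reduces the corollary to the following density statement: the probability measures of the form $\rho\, d\sigma_h$ with $\rho \geq 0$ are weakly dense in the space of probability measures on $[-2\sqrt{d-1}, 2\sqrt{d-1}]$. Indeed, given such an approximating sequence $\rho_n$ for a target $\mu$, set $\phi_n = \sqrt{\rho_n}$ and $f_n = \phi_n(B_d) h$; then $\|f_n\|_2 = 1$ and $\sigma_{f_n} = \rho_n \sigma_h \to \mu$ weakly. Because finitely supported probability measures are weakly dense on the compact interval, it suffices to approximate a single Dirac mass $\delta_t$, $t \in [-2\sqrt{d-1}, 2\sqrt{d-1}]$. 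For this take $\rho_\varepsilon := \sigma_h([t-\varepsilon, t+\varepsilon])^{-1} \mathbf{1}_{[t-\varepsilon, t+\varepsilon]}$, so that $\rho_\varepsilon \, d\sigma_h$ is a probability measure supported in $[t-\varepsilon, t+\varepsilon]$, hence converges weakly to $\delta_t$ as $\varepsilon \to 0$.

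The only point requiring attention is that the denominator $\sigma_h([t-\varepsilon, t+\varepsilon])$ be strictly positive for every $t$ in the closed interval and every $\varepsilon > 0$. This is immediate from the explicit Kesten--McKay density in Lemma \ref{confunc}, which is strictly positive on the open interval and integrates to a positive number over any one-sided neighborhood of either endpoint $\pm 2\sqrt{d-1}$. This is where the particular form of $\sigma_h$ enters; the rest is a formal consequence of the functional calculus, so I do not expect any serious obstacle.
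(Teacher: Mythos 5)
Your proof is correct and follows essentially the same route as the paper: both exploit the function $h$ of Lemma \ref{confunc}, whose spectral measure has full support in $[-2\sqrt{d-1},2\sqrt{d-1}]$, and apply spectral projections (your $\phi=\sqrt{\rho_\varepsilon}$ is exactly the normalized projection $Ph/\|Ph\|$ used in the paper) to approximate Dirac masses, concluding by density of finitely supported measures. Your write-up merely makes explicit the functional-calculus identity $\sigma_{\phi(B_d)h}=|\phi|^2\sigma_h$ and the convexity step that the paper leaves implicit.
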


\begin{proof} For the specific function $h$ defined in Lemma \ref{confunc} we have that the support $\sigma_h$ is the full interval $[-2\sqrt{d-1},2\sqrt{d-1}]$. The existence of one such function (using the spectral theorem) implies the statement.  Indeed, for small $\varepsilon$, the uniform measure on the interval $[x,x+\varepsilon]$ can be approximated by $\sigma_{P h /h(x)}$, where $P$ is the spectral projection to the interval $[x,x+\varepsilon]$.
\hfill $\square $\end{proof}

\section{Random processes on the tree}

\label{randomproc}
In this section we describe how to produce random processes on the tree $T_d$ from graphings. 
Furthermore, the correlation decay of the process can be bounded by a function of the largest eigenvalue of the graphing.

\begin{definition} Let $\GG$ be a $d$-regular graphing on the space $(X,\nu)$. We denote by $\Hom_0(T_d,\GG)$ the set of maps $\phi:V(T_d)\rightarrow X$ with the following two properties:
\begin{itemize} 
\item the image of an edge in $T_d$ is an edge in $\GG$,
\item $\phi$ is injective on the neighborhood of any vertex in $T_d$.
\end{itemize}
\end{definition}

The set $\Hom_0(T_d,\GG)$ is basically the set of covering maps from $T_d$ to $\GG$. 
Note that if $P$ is a path of length $k$ in $T_d$ then the image of $P$ under any $\phi\in\Hom_0(T_d,
\GG)$ is a non-backtracking walk on $\GG$.

\begin{lemma} There is a unique probability measure $\kappa$ on $\Hom_0(T_d,\GG)$ satisfying the following properties:
\begin{itemize}
\item for every vertex $v\in V(T_d)$ the distribution of the image of $v$ under a $\kappa$-random function is $\nu$,
\item $\kappa$ is invariant under the action of the automorphism group of $T_d$.
\end{itemize}
\end{lemma}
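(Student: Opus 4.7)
The plan is to construct $\kappa$ explicitly by a tower rooted at an arbitrary vertex, verify the marginal condition using the mass-transport identity (\ref{eq:degreeCond}), upgrade the construction's obvious root-preserving symmetry to full ${\rm Aut}(T_d)$-invariance via a reversibility argument, and finally obtain uniqueness from an exchangeability-plus-injectivity argument.

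\textbf{Construction.} Fix any vertex $o \in V(T_d)$ and root $T_d$ at $o$. Sample $\phi(o) \sim \nu$; then, having assigned $\phi$ to every vertex of tree-distance at most $k$ from $o$, independently for each $v$ at distance exactly $k$ pick a uniformly random bijection from the set of tree-children of $v$ onto the set of graph-neighbors of $\phi(v)$ other than the image of the parent of $v$ (or onto all $d$ neighbors of $\phi(o)$ when $k=0$). By the Kolmogorov extension theorem these consistent finite-dimensional laws determine a Borel probability measure $\kappa$ on $\Hom_0(T_d, \GG) \subset X^{V(T_d)}$; the injectivity condition in the definition of $\Hom_0$ is preserved step by step because each new child is mapped to a fresh neighbor.

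\textbf{Marginals and invariance.} I verify $\phi(v) \sim \nu$ by induction on $d(o,v)$: if $\phi(v) \sim \nu$ and $u$ is a child of $v$, then marginally $\phi(u)$ is a uniform random neighbor of $\phi(v)$, and its distribution is again $\nu$ by (\ref{eq:degreeCond}) applied with $A = X$, which yields $\int_X e(x, B)\, d\nu(x) = d\,\nu(B)$. The construction is manifestly invariant under the stabilizer ${\rm Aut}(T_d^*)$ of $o$, so to obtain full ${\rm Aut}(T_d)$-invariance it suffices to prove invariance under a single edge-swap exchanging $o$ with an adjacent vertex $o'$. This is a reversibility statement: equation (\ref{eq:degreeCond}) says exactly that the measure $d\mu(x,y) = \mathbf{1}[(x,y) \in E(\GG)]\,d\nu(x)$ on $X \times X$ is symmetric, and a routine induction propagates this symmetry up the tower, showing that the joint law of $\phi$ on any finite subtree does not depend on which of its vertices is designated as the innermost in the construction.

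\textbf{Uniqueness and main obstacle.} Let $\kappa'$ be any measure satisfying the two axioms. For each $v \in V(T_d)$, invariance under the subgroup of ${\rm Aut}(T_d)$ fixing $v$ and permuting its neighbors forces the conditional law of $(\phi(u))_{u\in N(v)}$ given $\phi(v)=x$ to be exchangeable in $u$, while the injectivity condition on $\Hom_0$ forces this image to be exactly $N(x)$; hence the conditional law is the uniform random bijection. Iterating at each successive sphere (where a parent has already been exposed) determines all finite-dimensional marginals of $\kappa'$, so $\kappa' = \kappa$. The delicate step is the upgrade from ${\rm Aut}(T_d^*)$- to ${\rm Aut}(T_d)$-invariance of $\kappa$: converting the static mass-transport identity (\ref{eq:degreeCond}) into a dynamical reversibility of the tower construction so that the measure genuinely does not remember the choice of root. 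The remaining steps are essentially bookkeeping.
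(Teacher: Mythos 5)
Your proposal is correct and follows essentially the same route as the paper: root the tree at an arbitrary vertex, give the root marginal $\nu$, extend by uniformly random bijections onto fresh neighbours (the ``randomly chosen covering'' forced by invariance and injectivity), and use the graphing identity (\ref{eq:degreeCond}), i.e.\ reversibility of the edge measure, to see that the construction is root-independent and has marginal $\nu$ at every vertex. The paper's own proof is just a terse sketch of this same argument, so your write-up is a more detailed rendering rather than a different approach.
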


\begin{proof} We can uniquely bulid up this probability measure in the following way. Let us start with an arbitrary fixed vertex $v$ of $T_d$. By the first requirement the image of $v$ has distribution $\nu$. Once the image of $v$ is determined, say $\phi(v)=x$, the remaining vertices of $T_d$ have to be mapped to the connected component of $x$. The second requirement guarantees that $\phi$ has to be a randomly chosen covering of this connected component. Note that the graphing axioms imply that the first requirement holds for every vertex of $T_d$. 
\hfill $\square $\end{proof}

Using the previous lemma we can create probability distributions of $\mathbb{R}$-valued functions on the vertices of $T_d$ in the following way. A similar construction was used in \cite{MB}, see Lemmas 35 and 36. Let $f:X\rightarrow\mathbb{R}$ be a function in $L^2(X)$. Then the composition $f\circ\phi$ is an $\mathbb{R}$ valued process on the vertices of $T_d$ where $\phi$ is chosen randomly from $\Hom_0(T_d,\GG)$. We denote this process by $\mu_f$. An important special case is when $\GG=B_d$. In this case processes of the form $\mu_f$ are called {\it factor of i.i.d processes.} These processes can be equivalently defined without graphings in the following way. Let $f:[0,1]^{T_d^*}\rightarrow\mathbb{R}$ be a measurable function that is invariant under the natural action of ${\rm Aut}(T_d^*)$ on $[0,1]^{T_d^*}$. Then we construct a probability measure on $\mathbb{R}^{T_d}$ such that first we put uniform independent labels on the vertices of $T_d$ and then at each vertex $v$ we evaluate $f$ by putting the root on $v$. (Note that this is well-defined since $f$ is automorphism invariant.)

\begin{theorem}\label{cordec} Let $\GG$ be a Ramanujan graphing on $(X,\nu)$ and let $f:X\rightarrow\mathbb{R}$ be a function in $L^2(X)$. Then, in the process $\mu_f$, the absolute value of the correlation of the values on two vertices $v,w\in T_d$ is at most $$(k+1-2k/d)\Bigl(\frac{1}{\sqrt{d-1}}\Bigl)^k$$ where $k$ is the distance of $v$ and $w$.
\end{theorem}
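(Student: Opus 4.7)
The plan is to express the correlation as a matrix coefficient of a polynomial in the adjacency operator $\GG$, then combine the Ramanujan spectral gap with an explicit computation of the extremal polynomial value on the interval $[-2\sqrt{d-1},\,2\sqrt{d-1}]$. First I reduce to the case $\int f\,d\nu = 0$ and $\|f\|_2 = 1$, since the Pearson correlation is invariant under affine rescaling of $f$. By the construction of $\kappa$, conditional on $\phi(v)=x$ the vertex $\phi(w)$ is distributed as the endpoint of a uniformly random non-backtracking walk of length $k$ in $\GG$ starting at $x$ (at the first step there are $d$ choices of neighbor of $x$, and at each subsequent step the non-backtracking condition leaves $d-1$ choices). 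Consequently
\[ C_k = \int_X f(x)\,(\bar N_k f)(x)\,d\nu(x), \qquad \bar N_k := \frac{N_k}{d(d-1)^{k-1}}, \]
where $N_k(x,y)$ counts non-backtracking walks of length $k$ from $x$ to $y$.

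Next I invoke the classical three-term identity $\GG N_k = N_{k+1} + (d-1)\,N_{k-1}$ (valid for $k\ge 2$, with the anomalous case $\GG N_1 = N_2 + d\,N_0$), obtained by splitting each edge-extension of a non-backtracking walk into the genuinely non-backtracking case and the single-backtrack case at the new edge. This yields $N_k = p_k(\GG)$ for polynomials defined by $p_0=1$, $p_1 = t$, $p_2 = t^2 - d$, and $p_{k+1}(t) = t\,p_k(t) - (d-1)\,p_{k-1}(t)$ for $k\ge 2$. Since $\GG$ is Ramanujan, the spectral theorem applied to $\GG|_{L^2_0(X)}$ gives
\[ |C_k| \;\le\; \|\bar N_k|_{L^2_0(X)}\| \;\le\; \frac{1}{d(d-1)^{k-1}}\max_{|t|\le 2\sqrt{d-1}}|p_k(t)|. \]

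The main computation is to evaluate this extremum sharply. Substituting $t = 2\sqrt{d-1}\cos\theta$ and setting $Q_k(\theta) := p_k(t)/(d-1)^{k/2}$, the recurrence becomes the Chebyshev-type recurrence $Q_{k+1} = 2\cos\theta\cdot Q_k - Q_{k-1}$ for $k\ge 2$, with $Q_1 = 2\cos\theta$ and $Q_2 = 2\cos 2\theta + (d-2)/(d-1)$; the extra constant traces back to the ``$-d$'' in $p_2$ rather than the pure Chebyshev ``$-(d-1)$.'' An induction on $k$ then yields the explicit expansion
\[ Q_k(\theta) = 2\cos k\theta + \frac{d-2}{d-1}\bigl(2\cos(k-2)\theta + 2\cos(k-4)\theta + \cdots + L_k\bigr), \]
where $L_k = 2\cos\theta$ if $k$ is odd and $L_k = 1$ if $k$ is even. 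For $d\ge 2$ all coefficients in this expansion are nonnegative, so the trigonometric polynomial is maximized at $\theta = 0$, giving $|Q_k(\theta)| \le Q_k(0) = 2 + (k-1)(d-2)/(d-1) = (k(d-2)+d)/(d-1)$. Substituting back,
\[ |C_k| \;\le\; \frac{(d-1)^{k/2}}{d(d-1)^{k-1}}\cdot\frac{k(d-2)+d}{d-1} \;=\; \frac{k+1-2k/d}{(d-1)^{k/2}}, \]
which is the claimed bound.

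The main obstacle I expect is the combinatorial bookkeeping for the explicit formula for $Q_k$. The spectral theorem together with the Ramanujan property delivers an exponential bound at rate $(d-1)^{-k/2}$ essentially for free, but extracting the sharp polynomial prefactor $k+1-2k/d$ forces one to compute the $L^\infty$-norm of $p_k$ on the Ramanujan interval precisely, and it is exactly the anomalous initial condition $p_2 = t^2 - d$ (rather than $t^2 - (d-1)$) that creates the nontrivial nonnegative-coefficient tail of $Q_k$ and thereby supplies the linear-in-$k$ correction beyond the pure Chebyshev case.
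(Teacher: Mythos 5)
Your proposal is correct and follows essentially the same route as the paper: express the distance-$k$ correlation as $(f,\,N_k f)/(d(d-1)^{k-1})$ with $N_k$ the non-backtracking walk operator, write $N_k$ as a polynomial in $\GG$, and combine the spectral mapping theorem on $L^2_0(X)$ with the Ramanujan bound $\rho(\GG)\le 2\sqrt{d-1}$, the sharp constant coming from the maximum of that polynomial on $[-2\sqrt{d-1},2\sqrt{d-1}]$. The only cosmetic differences are that you derive the three-term recurrence (the identity the paper imports from the Alon--Benjamini--Lubetzky--Sodin argument) from scratch, and you locate the maximum via the nonnegative cosine expansion $Q_k=2T_k+\frac{d-2}{d-1}(2T_{k-2}+2T_{k-4}+\cdots)$ rather than evaluating $(d-2)U_k+2T_k$ at $x=1$; these are equivalent, since $U_k=2(T_k+T_{k-2}+\cdots)$.
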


The rest of the section is the proof of the above theorem. We imitate the proof from the paper \cite{ABLS} in the infinite setting.
The main idea is that the correlation decay in $\mu_f$ can be expressed in terms of non-backtracking random walks on $\GG$. 

We define a graphing $\GG^{(k)}$  on $(X,\nu)$; two vertices are connected in $\GG^{(k)}$ if and only if their distance is 
exactly $k$ in $\GG$. More precisely, we need weighted graphings. That is, instead of 
subsets of $X\times X$, we label the edges with nonnegative integers in a Borel measurable way. These will be the multiplicities of the edges in the graphing. Otherwise the definition is the same as the original one. If $v, w\in T_d$ with distance $k$, and $f\in L^2_0(X)$ with $||f||_2=1$, then by the definition of $\mu_f$ the reader can easily check that
\begin{equation}\label{correq}
{\rm corr}(\mu_f(v), \mu_f(w))=\biggl(f, \frac{\GG^{(k)}f}{d(d-1)^{k-1}}\biggr).\end{equation}

The arguments in the proof of Theorem 1.1. of \cite{ABLS} are valid for $d$-regular graphings as well. Therefore for $k\geq 1$ we have

\begin{equation}\label{correq2}
\frac{\GG^{(k)}}{d(d-1)^{k-1}}=\frac{1}{\sqrt{d(d-1)^{k-1}}}q_k \bigg(\frac{\GG}{2\sqrt{d-1}}\bigg),
\end{equation}

where  
\[q_k(x)=\sqrt{\frac{d-1}{d}}U_k(x)-\frac{1}{\sqrt{d(d-1)}}U_{k-2}(x); 
\quad U_k(\cos \vartheta )=\frac{\sin((k+1)\vartheta)}{\sin \vartheta}, \]
i.e., $U_k$ is the $k$th Chebyshev polynomial of the second kind for $k\geq 0$, and $U_{-1}\equiv 0$.

The spectral mapping theorem implies that if $\mathcal F: \mathcal H\rightarrow \mathcal H$ is a bounded self-adjoint operator on a Hilbert space $\mathcal{H}$, and 
$p$ is a polynomial, 
then $\|p(\mathcal F)\|\leq \max_{x\in [-\|\mathcal F\|, \|\mathcal F\|]} \vert p(x)\vert$.  Since $\GG$ is a Ramanujan graphing, its norm on $L^2_0$ is $2\sqrt{d-1}$.
Hence in our case this yields that 
\[\varrho\bigg(\frac{\GG^{(k)}}{d(d-1)^{k-1}}\bigg)\leq \max_{x\in [-1,1]} \frac{\vert q_k(x)\vert}{\sqrt{d(d-1)^{k-1}}}.\]

\medskip

We claim that~\[\max_{x\in [-1,1]} q_k(x)=\sqrt{d/(d-1)}(k+1-2k/d).\]
To see this let $T_k(\cos(\theta))=\cos(k\theta)$ be the defining equation for the $k$-th Chebyshev polynomial of the first kind. It is easy to see that \[\sqrt{d(d-1)}q_k(x)=(d-2)U_k(x)+2T_k(x).\]
Both $|U_k|$ and $|T_k|$ have their maximal values at $1$ and $U_k(1)=k+1~,~T_k(1)=1$. By substituting $1$ into $q_k$ we get the claim.

\medskip

We obtain that
\[\varrho\bigg(\frac{\GG^{(k)}}{d(d-1)^{k-1}}\bigg)\leq\Bigl(\frac{1}{\sqrt{d-1}}\Bigl)^k\bigg ( k+1-\frac{2k}{d}\bigg).\]
This finishes the proof. \hfill $\square $

\medskip

\section{Randomized local algorithms}

As it was described in the introduction, a randomized local algorithm produces a random labeling of the vertices of a bounded degree graph using an initial i.i.d labeling and a local rule denoted by $f$. To give a precise definition we will need the following notation.

Let $S$ be an arbitrary set. An $S$-labeled graph $G$ is a graph together with a function $h:V(G)\rightarrow S$. A rooted graph is a graph in which a special vertex denoted by $o$ and called root is distinguished.  Assume that $r,d\in\mathbb{N}$. We denote by $\mathcal{N}(r,d,S)$ the set of isomorphism types of rooted, $S$-labeled graphs of maximum degree $d$ such that each vertex is of distance at most $r$ from the root. (Isomorphisms are assumed to be root and label preserving.)

A rule of radius $r$ and degree $d$ is a function $f:\mathcal{N}(r,d,S)\rightarrow S_2$ where $S_2$ is some set. 
Assume that $G$ is a graph of maximal degree at most $d$ and that $h:V(G)\rightarrow S$ is some labeling. Then we can use $f$ to produce a new labeling $h_2:V(G)\rightarrow S_2$ such that $h_2(v)$ is equal to the value of $f$ on the $S$-labeled rooted neighborhood of radius $r$ of $v$ where the root is placed on $v$. We denote the labeling $h_2$ by $h^f$.

\begin{definition} A randomized local algorithm of radius $r$ and degree $d$ is given by a measurable function $f:\mathcal{N}(r,d,\Omega)\rightarrow L$ (called rule of the algorithm) where $\Omega$ is a probability space and $L$ is a measure space. 
The input of the algorithm is a graph of maximal degree at most $d$ and the output is the random labeling $h^f$ where $h$ is a labeling of $G$ with independent, random elements from $\Omega$. 
\end{definition}

Note that local algorithms can also be computed on infinite graphs if they have bounded maximum degree. 
The next example produces independent sets in graphs \cite{AS}.

Let $f:\mathcal{N}(1,d,[0,1])\rightarrow \{0,1\}$ be the rule such that the value of $f$ is $1$ if and only if the label on the root is the smallest among all labels. It is clear that if $h:V(G)\rightarrow [0,1]$ is an arbitrary injective function then the support of $h^f$ is an independent set in $G$. Since random labelings $h:V(G)\rightarrow [0,1]$ are injective with probability $1$ we have that the local algorithm with rule $f$ produces a random independent set with probability $1$.

\medskip

In the rest of this section we focus on the case when $G$ is a $d$-regular graph with girth more than twice  the radius of $f$. In this case it is enough to define $f$ on $\Omega$-labeled versions of the neighborhood $S_r$ of the root in $T_d^*$ of radius $r$. In other words we can assume that $f$ is a function of the form $f:\Omega^{S_r}\rightarrow L$ that is invariant under the automorphisms of $S_r$. 

We can also represent $f$ as a function $g:\Omega_d\rightarrow L$ on the vertex set of the Bernoulli graphing $B_d$. Let $\phi:[0,1]\rightarrow\Omega$ be an arbitrary measure preserving map and $\psi:\Omega_d\rightarrow\Omega^{S_r}$ be the map defined by deleting the vertices outside $S_r$ and taking the $\phi$ images of the original labels. Let $g=f\circ\psi$. 
It is clear that the process $\mu_g$ on $T_d$ is the same as the process produced by the local algorithm on $T_d$ with rule $f$.
On the other hand if $G$ is any $d$-regular graph of girth at least $2(k+r+1)$ then the distribution of the local algorithm in any ball of radius $k$ is the same as its distribution on $T_d$ in a similar ball. 
It follows for example that to analyze local properties (such as correlation decay) of local algorithms in the large-girth setting, it is enough to consider the algorithm on the tree $T_d$. This creates the connection between Bernoulli graphings and local algorithms. As a corollary of Theorem \ref{cordec} and Theorem \ref{bernram} we obtain the following.

\begin{theorem} Assume that $r,k\in\mathbb{N}$ and that $f\in L^2(\Omega^{S_r})$ is invariant under ${\rm Aut}(S_r)$. Let $G$ be a $d$-regular graph of girth at least $k+2r+2$. If $v,w$ are two vertices of distance $k$ in $G$ then the absolute value of the correlation of the values on $v$ and $w$ in the local algorithm given by $f$ is at most $(k+1-2k/d)(d-1)^{-k/2}$.
\end{theorem}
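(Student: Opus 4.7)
The plan is to reduce the statement to a correlation bound on the tree $T_d$, where Theorem \ref{cordec} together with the Ramanujan property of the Bernoulli graphing $B_d$ (Theorem \ref{bernram}) yields exactly the desired decay.

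First I would re-encode the rule $f$ as a function $g\in L^2(\Omega_d)$ on the vertex space of the Bernoulli graphing, using the construction spelled out just above the theorem. Namely, fix a measure-preserving map $\phi:[0,1]\to\Omega$, let $\psi:\Omega_d\to\Omega^{S_r}$ be the map obtained by restricting a labeling in $[0,1]^{T_d^*}/{\rm Aut}(T_d^*)$ to the ball $S_r$ about the root and then applying $\phi$ coordinatewise, and put $g=f\circ\psi$. The ${\rm Aut}(S_r)$-invariance of $f$ makes $g$ well-defined, and the factor of i.i.d.\ process $\mu_g$ on $T_d$ coincides with the output of the local algorithm run on $T_d$ with rule $f$. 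Applying Theorem \ref{cordec} to the Ramanujan graphing $B_d$ and the centered version $g-\int g\,d\nu_d$ (centering does not affect correlations), I obtain the bound $(k+1-2k/d)(d-1)^{-k/2}$ on the absolute value of the correlation of the values of $\mu_g$ at any two tree vertices at distance $k$.

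It remains to transfer this bound from $T_d$ to $G$ via the girth hypothesis. For $v,w\in V(G)$ at distance $k$, the assumption that the girth of $G$ is at least $k+2r+2$ forces the bi-rooted labeled $r$-neighborhood of $(v,w)$ in $G$, under i.i.d.\ labeling, to have the same joint distribution as the corresponding bi-rooted $r$-neighborhood of a distance-$k$ pair in $T_d$. Since the algorithm's output at each vertex is a measurable function of its $r$-neighborhood, the joint distribution — and hence the correlation — of the outputs at $(v,w)$ in $G$ equals the corresponding correlation in $\mu_g$, and the tree bound above completes the proof. The only step that needs careful bookkeeping is the girth-to-local-isomorphism check, which is exactly the content of the remark immediately preceding the theorem.
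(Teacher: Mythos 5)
Your proposal is correct and follows essentially the same route as the paper: encode the rule as a function $g=f\circ\psi$ on the Bernoulli graphing, apply Theorem \ref{cordec} with Theorem \ref{bernram} to bound the correlations of the factor of i.i.d.\ process $\mu_g$ on $T_d$, and transfer the bound to $G$ via the girth hypothesis, which makes the labeled $r$-neighborhoods of a distance-$k$ pair in $G$ distributionally identical to those in $T_d$. The paper presents the result as an immediate corollary of exactly these ingredients, so no further comment is needed.
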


\section{Characterization of correlation sequences}

\medskip

Our goal is to give an algebraic characterization (up to closure with respect to pointwise convergence) for possible correlation sequences in factor of i.i.d processes.  We return to the proof of Theorem \ref{cordec}. Let us apply (\ref{specfact}) to $f$ and $\mathcal{G}/(2\sqrt{d-1})$ in the calculation. We obtain that the value of (\ref{correq}) for two vertices of distance $k$ is equal to
\[\int_{[-1,1]} \frac{1}{\sqrt{d(d-1)^{k-1}}}q_k~d\sigma_f\]
where $\sigma_f$ is the spectral measure of $f$ with respect to $\mathcal{G}/(2\sqrt{d-1})$.
The next theorem follows immediately from Corollary \ref{confunccor}.

\begin{theorem}\label{corchar} Let $X_d$ denote the set of all sequences with 
$x_k=\int d^{-1/2}(d-1)^{(1-k)/2} q_k~d\eta$ where $\eta$ is a probability measure on $[-1,1]$.
Then the closure of possible correlation sequences in factor of i.i.d processes is equal to $X_d$.
\end{theorem}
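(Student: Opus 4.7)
The plan is to read off Theorem \ref{corchar} directly from the spectral identity developed in the proof of Theorem \ref{cordec} together with the density statement of Corollary \ref{confunccor}. Fix $f \in L^2_0(\Omega_d)$ with $\|f\|_2 = 1$ (we may pass to this case by centering and normalizing without losing any correlations). Specializing (\ref{correq}) and (\ref{correq2}) to $\GG = B_d$ gives
\begin{equation*}
{\rm corr}(\mu_f(v),\mu_f(w)) = \frac{1}{\sqrt{d(d-1)^{k-1}}}\Bigl(f,\, q_k\bigl(B_d/(2\sqrt{d-1})\bigr) f\Bigr).
\end{equation*}
Since $B_d$ is Ramanujan by Theorem \ref{bernram}, the spectrum of $B_d/(2\sqrt{d-1})$ on $L^2_0(\Omega_d)$ is contained in $[-1,1]$. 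Writing $\tau_f$ for the spectral measure of $f$ with respect to this normalized operator, the spectral identity (\ref{specfact}) rewrites the right-hand side as $\int_{[-1,1]} d^{-1/2}(d-1)^{(1-k)/2}\, q_k\, d\tau_f$. This immediately shows that every correlation sequence of a factor of i.i.d.\ process lies in $X_d$, namely with $\eta = \tau_f$.

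For the reverse inclusion I would observe that $\tau_f$ is the pushforward of the spectral measure $\sigma_f$ of $B_d$ (as in Corollary \ref{confunccor}) under the rescaling $y\mapsto y/(2\sqrt{d-1})$, which identifies $[-2\sqrt{d-1},2\sqrt{d-1}]$ with $[-1,1]$ and preserves weak convergence. By Corollary \ref{confunccor}, the measures $\tau_f$ are therefore dense, in the weak topology, in all probability measures on $[-1,1]$. Given an arbitrary $\eta$ on $[-1,1]$, I would choose unit-norm $f_n \in L^2_0(\Omega_d)$ with $\tau_{f_n}\to\eta$ weakly; each $q_k$ is a polynomial, hence continuous on the compact interval $[-1,1]$, so $\int q_k\, d\tau_{f_n}\to\int q_k\, d\eta$ for every $k$. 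The correlation sequences of the processes $\mu_{f_n}$ then converge pointwise to the sequence associated with $\eta$, placing every element of $X_d$ in the pointwise closure of correlation sequences.

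Finally, to upgrade this inclusion to the claimed equality, I would note that $X_d$ is itself closed under pointwise convergence: probability measures on the compact interval $[-1,1]$ form a weakly compact set, so any pointwise limit of sequences $x^{(n)} \in X_d$ is realized by a weakly convergent subsequence of the representing measures $\eta_n$, whose limit is again a probability measure on $[-1,1]$ producing the same limit sequence. No step is genuinely difficult; the only real bookkeeping point is the rescaling between spectral measures with respect to $B_d$ and to $B_d/(2\sqrt{d-1})$, which must be tracked carefully so that the density assertion of Corollary \ref{confunccor} transfers cleanly onto the interval $[-1,1]$ appearing in the definition of $X_d$.
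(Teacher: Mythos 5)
Your proposal is correct and follows essentially the same route as the paper: combine the identity obtained from (\ref{correq}), (\ref{correq2}) and (\ref{specfact}) for $B_d/(2\sqrt{d-1})$ with the density of spectral measures from Corollary \ref{confunccor}. The only difference is that you spell out details the paper treats as immediate, namely the rescaling of spectral measures onto $[-1,1]$ and the weak-compactness argument showing $X_d$ is closed under pointwise convergence.
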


\bigskip

We finish with an example for a local algorithm on the tree $T_d$. We start with the intitial i.i.d labeling $\{X_u\}_{u\in T_d}$ where $X_u=1$ with probability $1/2$ and $X_u=-1$ otherwise. 
For $r\geq 0$ denote by $S_r(v)$ the neighborhood of radius $r$ around $v\in T_d$. Note that $$|S_r(v)|=1+d\frac{(d-1)^r-1}{d-2}$$ for $r\geq 1$.

For every $w\in T_d$ we define the random variable $Y_w=\frac{1}{\sqrt{|S_r(w)|}}\sum_{u\in S_r(w)} X_u$. 
It is clear that $\{Y_w\}_{w\in T_d}$ is the output of a local algorithm.
Furthermore we have that
\begin{multline*}
{\rm corr}(\mu_f(v), \mu_f(w))=\frac{1}{|S_r(v)|}{\rm cov}\Bigg(\sum_{u\in S_r(v)} X_u, \sum_{u\in S_r(w)} X_u\Bigg)\\= 
\frac{1}{|S_r(v)|}{\rm var}\Bigg(\sum_{u\in S_r(v)\cap S_r(w)}X_u\Bigg)=\frac{|S_r(v)\cap S_r(w)|}{|S_r(v)|}=\frac{d(d-1)^{r-k/2}-2}{d(d-1)^r-2}
\end{multline*} if $k$ is even and $k<2r$.   
For the last equation we use the fact that $S_r(v)\cap S_r(w)$ is equal to $S_{r-{k/2}}(z)$ where $z$ is the middle point of of the path connecting $v$ and $w$.
Now, as $r$ goes to infinity, the lower bound for the correlation converges to $(d-1)^{-k/2}$.

For odd $k$ with $k\leq 2r+1$ we have two points in the middle and so
\[\frac{|S_r(v)\cap S_r(w)|}{|S_r(v)|}=\frac{2[1+(d-1)+\ldots+(d-1)^{r-(k+1)/2}]}{|S_r(v)|}=2\frac{(d-1)^{r-(k-1)/2}-1}{d(d-1)^r-2}.\] 
This converges to $\frac2d(d-1)^{-\frac{k-1}{2}}$ as $r\rightarrow \infty$.

This shows that the correlation decay is close to be optimal in this simple example.

\section{Semi-definite functions, spherical representations and Gaussian processes}

The goal of this section is to show how correlation sequences of invariant processes on $T_d$ can be viewed from a representation theoretic perspective. Moreover, every such correlation sequence produces a unique invariant Gaussian process on $T_d$ which is interesting on its own right.

Let $\mathcal{P}_d$ denote the set of all positive semi-definite functions $p:T_d\times T_d\rightarrow [-1,1]$ such that $p(v,v)=1$ and the value of $p(v,w)$ depends only on the distance of $v$ and $w$ for every pair $v,w\in T_d$.
It is clear that the correlation structure of an arbitrary (real valued) invariant process on $T_d$ is an element in $\mathcal{P}_d$.
On the other hand any element of $\mathcal P_d$ defines a symmetric representation of $T_d$ in some real 
Hilbert space. To be more precise, there is a function $\phi$ from $T_d$ to some separable Hilbert space 
$\mathcal{H}$ such that $p(v,w)=(\phi(v),\phi(w))$ and that $\{\phi(v)|v\in T_d\}$ generates $\mathcal{H}$. It is clear that $\phi$ is unique up to orthogonal transformations and that there is an orthogonal representation $\psi:{\rm Aut}(T_d)\rightarrow O(\mathcal{H})$ with the property that $\phi(\alpha(v))=\psi(\alpha)(\phi(v))$ for every $v\in T_d$ and $\alpha\in{\rm Aut}(T_d)$. In particular $\phi(o)$ is fixed under ${\rm Aut}(T_d^*)$ where $o$ is any distinguished root in $T_d$. Such representations of ${\rm Aut}(T_d)$ are called {\it spherical} in the literature. In other words, a representation of ${\rm Aut}(T_d)$ is spherical if the subgroup ${\rm Aut }(T_d^*)$ has a fixed vector $x$ of length $1$ such that the images of $x$ under ${\rm Aut}(T_d)$ generate the underlying Hilbert space. It is clear that each spherical representation $\psi$ of ${\rm Aut}(T_d)$ gives rise to an element in $\mathcal{P}_d$ by $p(v,w)=(\psi(\alpha_v)(x),\psi(\alpha_w)(x))$ where $\alpha_v(o)=v, \alpha_w(o)=w$. (The spherical property guarantees that $p$ is well-defined.) This construction yields a one to one correspondence between spherical representations and elements in $\mathcal{P}_d$. 

To complete the picture, for every $p\in\mathcal{P}_d$ we construct an invariant process with correlation structure $p$.
The most natural choice is an infinite dimensional Gaussian distribution $\gamma_p$ on $\mathbb{R}^{T_d}$ with correlation structure $p$. The uniqueness of $\gamma_p$ guarantees that it is an invariant under ${\rm Aut}(T_d)$.
If $\mu$ is any other invariant process with the same correlation structure $p$ then we can also obtain $\gamma_p$ by the central limit theorem in the following way. Assume that $\mu$ is already normalized in a way that it has zero expectation and variance (at each vertex) equal to $1$. Let $[\mu]_n$ denote the distibution of $q_1+q_2+\dots+q_n$ where each $q_i$ is an independent element of $\mathbb{R}^{T_d}$ chosen with distribution $\mu$. It is clear that the weak limit of $n^{-1/2}[\mu]_n$ is a Gaussian process with correlation structure $p$ and thus it is equal to $\gamma_p$.
If in particular $\mu$ is a factor of i.i.d process then so is $[\mu]_n$ for every $n$. 
It follows that in this case $\gamma_p$ is a weak limit of factor of i.i.d processes. Therefore we obtain the following.

\begin{corollary}
A Gaussian process is a limit of factor of i.i.d. processes if and only if its correlation decay is as in 
Theorem \ref{corchar}.
\end{corollary}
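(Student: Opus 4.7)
The plan is to establish the two implications separately, leveraging Theorem~\ref{corchar} for the correlation sequences and the central-limit construction sketched in the paragraph preceding the corollary.

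For the \emph{only if} direction, suppose a Gaussian process $\gamma$ on $T_d$ is a weak limit of factor of i.i.d.\ processes $\mu_n$. After normalizing each $\mu_n$ to zero mean and unit variance at each vertex (a pointwise, hence factor of i.i.d.-preserving, operation), the correlation sequence of $\mu_n$ lies in $X_d$ by Theorem~\ref{corchar}. Weak convergence together with the normalization of second moments (or a uniform-integrability argument, which can be arranged since the Gaussian target has finite moments) gives that the correlation sequence of $\gamma$ is the pointwise limit of those of the $\mu_n$, hence lies in $X_d$ because $X_d$ is defined as a closure and is therefore closed under pointwise convergence.

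For the \emph{if} direction, start from a correlation sequence $p \in X_d$. First treat the special case where $p$ is realized by an honest factor of i.i.d.\ process $\mu$, normalized to mean $0$ and variance $1$ at each vertex. Form $n^{-1/2}[\mu]_n$, the normalized sum of $n$ independent copies of $\mu$, as in the paragraph preceding the corollary. Each such process is again factor of i.i.d.: $n$ independent $[0,1]$-labelings can be encoded as a single $[0,1]$-labeling via a measurable bijection, and running the original rule on each coordinate, summing, and rescaling is a local measurable operation invariant under $\mathrm{Aut}(T_d)$. By the classical multivariate CLT applied to finite-dimensional marginals, $n^{-1/2}[\mu]_n$ converges weakly to $\gamma_p$. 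For general $p \in X_d$, Theorem~\ref{corchar} writes $p$ as a pointwise limit of correlation sequences $p^{(m)}$ coming from factor of i.i.d.\ processes; since Gaussian processes are determined by their covariance, pointwise convergence of covariances implies convergence of all finite-dimensional Gaussian marginals, so $\gamma_{p^{(m)}} \to \gamma_p$ weakly on $\mathbb{R}^{T_d}$. A diagonal extraction then produces a sequence of factor of i.i.d.\ processes weakly converging to $\gamma_p$.

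The main obstacle I anticipate is verifying that the operation ``take $n$ independent copies, sum, and rescale'' truly lands inside the factor of i.i.d.\ class in the precise sense used throughout the paper. This requires reconciling the radius-$r$, local-rule description with the sum construction and making the measurable encoding step precise. A secondary technical point is the passage from weak convergence $\mu_n \to \gamma_p$ to convergence of covariances in the forward direction, which needs either an a priori normalization or a uniform-integrability argument tailored to the factor of i.i.d.\ framework.
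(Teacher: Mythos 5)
Your proposal is correct and follows essentially the same route as the paper: the paper's proof of this corollary is precisely the preceding paragraph's central-limit construction (using that $n^{-1/2}[\mu]_n$ is again factor of i.i.d.\ and converges weakly to $\gamma_p$) combined with Theorem~\ref{corchar} and the closedness of $X_d$ under pointwise limits, which is exactly your argument made explicit via the diagonal extraction. The two technical points you flag (encoding $n$ independent labelings into one to stay in the factor of i.i.d.\ class, and passing from weak convergence to convergence of covariances) are asserted without detail in the paper as well, so they are refinements rather than a different approach.
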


{\sc \'Agnes Backhausz.} E\"otv\"os Lor\'and University, Budapest, Hungary. {\tt agnes@cs.elte.hu}

{\sc Bal\'azs Szegedy.} University of Toronto, Canada. {\tt szegedyb@gmail.com} 

{\sc B\'alint Vir\'ag.} University of Toronto, Canada. {\tt balint@math.toronto.edu} 

\end{document}